\newtheorem{theorem}{Theorem}[section]
\newtheorem{lemma}[theorem]{Lemma}
\newtheorem{remark}[theorem]{Remark}
\newtheorem{proposition}[theorem]{Proposition}
\newtheorem{corollary}[theorem]{Corollary}
\newtheorem{definition}[theorem]{Definition}
\newtheorem{example}[theorem]{Example}
\newenvironment{proof}{\begin{trivlist} \item[]{\em Proof.}}{\end{trivlist}}
\newcommand\be{\begin{equation}}
\newcommand\ee{\end{equation}}
\newcommand\bn{\begin{eqnarray}}
\newcommand\en{\end{eqnarray}}
\newcommand\bns{\begin{eqnarray*}}
\newcommand\ens{\end{eqnarray*}}
\newcommand\bd{\begin{definition}}
\newcommand\ed{\end{definition}}
\newcommand\br{\begin{remark}}
\newcommand\er{\end{remark}}
\newcommand\bt{\begin{theorem}}
\newcommand\et{\end{theorem}}
\newcommand\bp{\begin{proposition}}
\newcommand\ep{\end{proposition}}
\newcommand\bc{\begin{corollary}}
\newcommand\ec{\end{corollary}}
\newcommand\bl{\begin{lemma}}
\newcommand\el{\end{lemma}}
\newcommand\pf{\begin{proof}}
\newcommand\qed{\end{proof}\eop}
\newcommand\bR{{\mathbb R}}
\def\eop{\hfill\rule{2.0mm}{2.0mm}}
\begin{document}

\title{The $m$th-order Eulerian Numbers}
\author{Tian-Xiao He\\
{\small Department of Mathematics}\\
 {\small Illinois Wesleyan University}\\
 {\small Bloomington, IL 61702-2900, USA}\\
}
\date{}

\maketitle

\begin{abstract}
\noindent 

We define the $m$th-order Eulerian numbers with a combinatorial interpretation. The recurrence relation of the $m$th-order Eulerian numbers, the row generating function and the row sums of the $m$th-order Eulerian triangle are presented. We also define the $m$th-order Eulerian fraction and its alternative form. Some properties of the $m$th-order Eulerian fractions are represented by using differentiation and integration. An inversion relationship between second-order Eulerian numbers and Stirling numbers of the second kind is given. Finally, we give the exact expression of the values of the $m$th-order Eulerian numbers.

\vskip .2in \noindent AMS Subject Classification: 05A15, 05A05, 15B36, 15A06, 05A19, 11B73, 11B83.

\vskip .2in \noindent \textbf{Key Words and Phrases:} Eulerian numbers, second-order Eulerian numbers, $m$th-order Eulerian numbers, Eulerian fraction, $m$th-order Eulerian fraction, Stirling numbers.  

\end{abstract}

%%%%%%%%%%%%%%%%%%%%%%%%%%%%%%%%%%%%%%%%%%%%%%%%%%%%%%%%%%%%%%%%%%%%%%
%\setcounter{page}{1} 
\pagestyle{myheadings} 
\markboth{T. X. He}
{$m$-order Eulerian Numbers}
\section{Instruction}

The (first order) Eulerian numbers represented by the values of the Eulerian triangle derived from Euler \cite[\S 13]{Euler} and \cite[P. 485]{Euler-2}. Their definition and properties can be found, for example, in the standard treatises in \cite{Aig, Com, GKP, Pet, Rio}, etc. However, their notations are not standard, which are shown by $A_{n,k}$, $A(n, k)$, $E(n,k)$, $\langle \begin{array}{cc}n\\k\end{array}\rangle$, etc. In this paper, we use the notation $T^{(1)}_{n,k}$ for the unification of $m$th-order Eulerian numbers, $m=1,2,\ldots$. The Eulerian number $T^{(1)}_{n,k}$ is also considered as the $(n,k)$th entry of the lower triangle matrix $(T^{(1)}_{n,k})_{ 0\leq k\leq n-1}$ that gives the number of permutations $\pi_1 \pi_2\ldots \pi_n$ in $[n]=\{1,2,\ldots, n\}$ that have exactly $k$ descents (or ascents), namely, $k$ places where $\pi_i>\pi_{i+1}$ (or $\pi_i<\pi_{i+1}$).  It is well known the Eulerian numbers satisfy the symmetric law 

\[
T^{(1)}_{n,k}=T^{(1)}_{n, n-k-1}
\]
and the recurrence relation 

\[
T^{(1)}_{n,k}=(k+1)T^{(1)}_{n-1,k}+(n-k)T^{(1)}_{n-1,k-1}, \quad 0\leq k \leq n-1,
\]
with $T^{(1)}_{1,0}=1$ and $T^{(1)}_{n,k}=0$ for $k<0$ and $k\geq n$. 

Eulerian numbers are useful primarily because they provide an unusual connection between ordinary powers,  consecutive binomial coefficients, Stirling numbers of the first kind and the second kind, Bernoulli numbers, as well as $B$-spline functions. Some related materials and literatures can be found in \cite{Com, GKP, Pet, Rio, He12} and their references. Recently, Butzer, Markett and the author \cite{BHM} study Eulerian fractions and Stirling, Bernoulli and Euler functions with complex order parameters and their impact on the polylogarithm functions. 

The entries of a similar triangle denoted by $(T^{(2)}_{n,k})_{0\leq k\leq n-1}$ are called second-order Eulerian numbers because of the following recurrence relation similar to the recurrence relation of $T^{(1)}_{n,k}$:

\[
T^{(2)}_{n,k}=(k+1)T^{(2)}_{n-1,k}+(2n-k-1)T^{(2)}_{n-1,k-1}, \quad 0\leq k\leq n-1,
\]
with $T^{(2)}_{1,0}=1$ and $T^{(1)}_{n,k}=0$ for $k<0$ and $k\geq n$. 

The numbers $T^{(2)}_{n,k}$ are investigated by Carlitz \cite{Car1,Car2} and Riordan \cite[Sect. 4]{Rio76} using a different notation. Carlitz derives their formal properties and asks for a combinatorial interpretation.
Riordan states such an interpretation, but not in terms of descents of permutations as that is represented in 
Gessel and Stanley \cite{GS}. If we form permutations of the multiset $[1,1,2,2,\ldots, n,n]$ with the special property that all numbers between the two occurrences of $u$ are less than $u$, for $1\leq u\leq n$, then
­­$T^{(2)}_{n,k}$ is the number of such permutations that have $k$ descents. For example, there are eight suitable single-descent permutations of $[1,1,2,2,3,3]$:

\[
113322,113223,221133, 211233, 223311,223113,331122, 311223.
\]

Second-order Eulerian numbers are important chiefly because of their connection with Stirling numbers shown in Ginsburg \cite{Gis}. Recently, there are several interesting results related second-order Eulerian numbers interested combinatorial people. For instance, Rz\c{a}dkowski and Urli\'anska \cite{RU} find  the connection between second-order Eulerian numbers and Bernoulli numbers, which was precisely proved in Fu\cite{Fu}. O'Sullivan \cite{Osu} use second-order Eulerian numbers, Bernoulli numbers, Stirling cycle numbers to give a combinatorial proof of a remarkable conjecture relevant two sequences introduced by Ramanujan. 

In next section, we extend Eulerian numbers and second-order Eulerian numbers to $m$th-order Eulerian numbers with a combinatorial interpretation. The recurrence relation of the $m$th-order Eulerian numbers, the row generating function and the row sums of the $m$th-order Eulerian triangle are given. In Section3, we define the $m$th-order Eulerian fraction and its alternative form. Some properties of the $m$th-order Eulerian fractions are represented by using differentiation and integratio. A formula for evaluating Stirling numbers of the second kind in terms of second-order Eulerian 
numbers is presented and extended to $m$th-order setting. Naturally, the inverse process to find second-order Eulerian numbers by using Stirling numbers of the second kind is also considered. In the final section, we give the exact expression of the values of the $m$th-order Eulerian numbers.

\section{The $m$th-order Eulerian Numbers}

We now define $m$th-order Eulerian numbers.

\begin{definition}\label{def:2.1}
Let $Q_{mn}$ be the set of all permutations $a_1a_2 \ldots a_{mn}$ of the multisef $M_n=\{1,\ldots,1,2,\ldots, 2,\ldots, n,\ldots, n\}$, with the multiplicity of $m$ for each integer, such that if $u < v < w$ and $a_u = a_w$, then $a_u\geq a_v$. (We call the elements of $Q_{mn}$ $m$-Stirling permutations.) We denote by $T^{(m)}_{n,k}$ the number of permutations $a_1a_2\ldots a_{mn}\in Q_{mn}$ with exactly $k$ descents, i,e., such that $a_j > a_{j+1}$ or $j = mn$ for exactly $k$ values of $j\in [mn]=\{ 1, 2,\ldots, mn\}$. Let $T^{(m)}_{n,k}$ be the number of permutations in $Q_{mn}$ with $k$ descents. Thus $T^{(m)}_{1,1}=1$, $T^{(m)}_{1,k}=0$ for $k\not= 1$, and $T^{(m)}(n,k)=0$ for all $k< 0$ and $k\geq n$. 

$T^{(m)}_{n,k}$, the entries of the triangle $(T^{(m)}_{n,k})_{0\leq k \leq n-1}$, are called the $m$th-order Eulerian numbers. If $m=1, 2$ and $3$, $T^{(1)}_{n,k}$, $T^{(2)}_{n,k}$, and $T^{(3)}_{n,k}$ are called (first-order) Eulerian numbers, second-order and third-order Eulerian numbers, respectively. 
\end{definition}

For instance, for $m=2$ and $n = 3$ there are $15$ such permutations, $1$ with no descents, $8$ with a single descent, and $6$ with two descents:

\begin{align*}
&112233;\\
&113322,113223,221133, 211233, 223311,223113,331122, 311223;\\
&332211,322311,311322,332112,322113,321123.\\
\end{align*}
Hence, $T^{(2)}_{3,0}=1$, $T^{(2)}_{3,1}=8$, and $T^{(2)}_{3,2}=6$. 

\begin{theorem}\label{thm:2.2}
Let $T^{(m)}_{n,k}$ be the $m$th-order Eulerian numbers defined by Definition \ref{def:2.1}. Then for $m\geq 1$ and $0\leq k\leq n-1$ we have the recurrence relation of $T^{(m)}_{n,k}$,  

\be\label{2.1}
T^{(m)}_{n,k}=(k+1)T^{(m)}_{n-1,k}+(mn-k-m+1)T^{(m)}_{n-1,k-1},
\ee
where $T^{(m)}_{1,0}=1$, $T^{(m)}_{1,k}=0$ for $k\not= 0$, and $T^{(m)}_{n,k}=0$ for $k<0$ or $k\geq n$.

Particularly, for $m=1, 2,$ and $3$ we have, respectively, the recurrence relations of Eulerian numbers, second-order Eulerian numbers, and the third Eulerian numbers,

\begin{align}
&T^{(1)}_{n,k}=(k+1)T^{(1)}_{n-1,k}+(n-k)T^{(1)}_{n-1,k-1},\label{2.2}\\
&T^{(2)}_{n,k}=(k+1)T^{(2)}_{n-1,k}+(2n-k-1)T^{(2)}_{n-1,k-1},\label{2.3}\\
&T^{(3)}_{n,k}=(k+1)T^{(3)}_{n-1,k}+(3n-k-2)T^{(3)}_{n-1,k-1},\label{2.4}
\end{align}
where $T^{(1)}_{n,k}$ and $T^{(2)}_{n,k}$ are also denoted by $\left\langle \begin{array}{c}n\\k\end{array}\right\rangle$ and 
$\left\langle\left\langle \begin{array}{c}n\\k\end{array}\right\rangle\right\rangle$, respectively, in some literatures. 
\end{theorem}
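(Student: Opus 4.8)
The plan is to establish the recurrence \eqref{2.1} by a direct combinatorial argument on $m$-Stirling permutations, constructing every element of $Q_{mn}$ from an element of $Q_{m(n-1)}$ by inserting the new block of $m$ copies of $n$, and tracking exactly how the descent count changes. The special cases \eqref{2.2}, \eqref{2.3}, \eqref{2.4} then follow by setting $m=1,2,3$ and simplifying the coefficient $mn-k-m+1$; these require no separate argument.

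First I would observe that since $n$ is the largest value in $M_n$, the defining condition (all entries strictly between two equal entries are no larger) forces the $m$ copies of $n$ to occupy $m$ \emph{consecutive} positions in any permutation of $Q_{mn}$: nothing may be sandwiched between two $n$'s. Hence every $a_1\ldots a_{mn}\in Q_{mn}$ arises uniquely by choosing a permutation $b_1\ldots b_{m(n-1)}\in Q_{m(n-1)}$ and then inserting the solid block $\underbrace{n\cdots n}_{m}$ into one of the available gaps (before $b_1$, between consecutive $b_i$'s, or after $b_{m(n-1)}$). Because the block is solid, the $m-1$ internal adjacencies among the inserted $n$'s are all ascents (equal entries are not descents), so they never contribute to the descent count; the whole bookkeeping reduces to what happens at the two ends of the inserted block.

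The heart of the argument is the gap analysis, which I would split according to whether the insertion changes the number of descents. A permutation $b\in Q_{m(n-1)}$ with $k$ descents has, by the convention that position $j=m(n-1)$ counts as a descent, exactly $k$ descent-positions and $m(n-1)-k$ ascent-positions, giving $m(n-1)+1$ gaps in total. Inserting the block $n\cdots n$ into a gap that currently \emph{is} a descent (including the forced final descent at the right end) leaves the descent count unchanged, because the new block opens with an ascent into it and closes with the descent that was already there; this accounts for the term $(k+1)T^{(m)}_{n-1,k}$, the $k+1$ coming from the $k$ internal descent-gaps plus the terminal one. Inserting the block into a gap that is currently an \emph{ascent} creates one new descent (the block's right end $n$ now falls to a smaller successor) while the block's left end remains an ascent; starting from a permutation with $k-1$ descents this yields a permutation with $k$ descents, and the number of such ascent-gaps in a $(k-1)$-descent permutation is $m(n-1)-(k-1)=mn-k-m+1$, giving the term $(mn-k-m+1)T^{(m)}_{n-1,k-1}$. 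Summing the two contributions gives \eqref{2.1}.

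The main obstacle I anticipate is making the end-of-word descent convention airtight, since the theorem counts a descent at position $j=mn$ unconditionally: I must verify that inserting at the extreme right gap is correctly classified (it replaces the old terminal descent of $b$ by the new terminal descent of the block, so it belongs to the ``descent-gap'' case and preserves the count) and that no gap is double-counted, so that the gaps partition cleanly into exactly $k+1$ count-preserving gaps and $m(n-1)-k$ count-increasing gaps. I would confirm the boundary data by checking the initial values $T^{(m)}_{1,0}=1$ and the $m=2$, $n=3$ table computed above, where \eqref{2.3} predicts $T^{(2)}_{3,1}=2\cdot T^{(2)}_{2,1}+3\cdot T^{(2)}_{2,0}$ and $T^{(2)}_{3,2}=3\cdot T^{(2)}_{2,1}+2\cdot T^{(2)}_{2,2}$, matching $8$ and $6$.
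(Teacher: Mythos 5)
Your overall strategy is the same as the paper's: build $Q_{mn}$ from $Q_{m(n-1)}$ by inserting the $m$ copies of the new largest letter and classify the insertion gaps by whether they preserve or increase the descent count. Your gap bookkeeping ($k+1$ count-preserving gaps versus $m(n-1)-(k-1)=mn-k-m+1$ count-increasing gaps) is exactly the count that yields \eqref{2.1}. However, your key structural claim --- that the defining condition of $Q_{mn}$ forces the $m$ copies of $n$ to be consecutive --- is false for Definition \ref{def:2.1} as stated. The condition there is that every entry lying between two equal entries is \emph{no larger} than the repeated value ($a_u\geq a_v$); for the maximal letter $n$ this is vacuous, so the copies of $n$ may be separated. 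Concretely, $2112\in Q_4$ for $m=2$, $n=2$, and $113223$, $311223$ appear in the paper's own list for $Q_6$, all with split copies of the largest letter. Your proposed bijection therefore fails on both sides: inserting the solid block $22$ into $11$ produces $1221\notin Q_4$ and never produces $2112\in Q_4$. What the condition actually forces to be consecutive is the block of $m$ copies of $1$ (anything between two $1$'s must be $\leq 1$). The argument is repaired by deleting/inserting the block $1^m$ instead (relabelling $i\mapsto i-1$ identifies permutations of $\{2,\dots,n\}$, each with multiplicity $m$, with $Q_{m(n-1)}$): inserting $1\cdots1$ into a descent gap or at the far left preserves the descent count, giving $k+1$ gaps, while inserting into an ascent gap or at the far right creates one new descent, giving $m(n-1)-(k-1)$ gaps for a word with $k-1$ descents --- the same coefficients. (The paper's own proof inserts the copies of $n$ and is silent on why they form a block, so it suffers from the same defect; but since you made the claim explicit and justified it incorrectly, it is a genuine gap in your write-up.)

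Two smaller points. First, your handling of the end-of-word descent convention is internally inconsistent: you declare that a word with $k$ descents has exactly $k$ descent-positions including the terminal one, but then count ``$k$ internal descent-gaps plus the terminal one'' to get $k+1$. The paper itself wavers here ($T^{(m)}_{1,1}=1$ in Definition \ref{def:2.1} versus $T^{(m)}_{1,0}=1$ in the theorem and all tables); you should fix one convention --- the tables and the example $112233$ (zero descents) show the terminal position is \emph{not} counted --- and state it once. Second, your closing numerical check is mis-stated: \eqref{2.3} gives $T^{(2)}_{3,1}=2\,T^{(2)}_{2,1}+(2\cdot3-1-1)\,T^{(2)}_{2,0}=2\cdot2+4\cdot1=8$, not $2\cdot2+3\cdot1$, so the verification as written does not actually confirm the recurrence.
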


\begin{proof}
We can use the descent definition of $m$-order Eulerian numbers to prove the linear recurrence relation as follows. If $w$ is in $Q_{mn}$ with $k$ descents, then deleting element $m$$n$ from $w$ results in a permutation in $Q_{m(n-1)}$ with $k$ or $k-1$ descents. Conversely, we can form permutation in $Q_{mn}$ with $k$ descents from permutations in $Q_{m(n-1)}$ with $k$ or $k-1$ descents by inserting $m$$n$. 

More precisely, suppose $v$ is a permutation in $Q_{m(n-1)}$ with $k-1$ descents. The inserting $m$$n$ at the far left of $v$ or in an descent position of $v$ creates a permutation $w\in Q_{mn}$ with $k$ descents. There are $m(n-1)-(k-1)=mn-k-m+1$ such positions. 

Similarly, if $v\in Q_{m(n-1)}$ already has $k$ descents, then inserting $m$$n$ in a descent position of $v$ or at the far right gives a permutation $w\in Q_{mn}$ with $k$ descents. There are $k+1$ such positions. Hence, we have established the recurrence relations \eqref{2.1}-\eqref{2.4}
\end{proof}

The second-order Eulerian numbers satisfy the recurrence relation \eqref{2.3}, that follows directly from the above definition. For instance, from $T^{(2)}_{2,0}=0$ and $T^{(2)}_{2,1}=1$, 

\begin{align*}
&T^{(2)}_{3,1}=1\cdot T^{(2)}_{2,1}+(2\cdot 3-1)\cdot T^{(2)}_{2,0}=1,\\
&T^{(2)}_{3,2}=2\cdot T^{(2)}_{2,2}+(2\cdot 3-2)\cdot T^{(2)}_{2,1}=2\cdot 2+4\cdot 1=8,\\
&T^{(2)}_{3,3}=3\cdot T^{(2)}_{2,3}+(2\cdot 3-3)\cdot T^{(2)}_{2,2}=0+3\cdot 2=6.
\end{align*}

The following table gives the first few values of $T^{(m)}_{n,k}$ for $0\leq k\leq n-1$.

{\footnotesize  
\begin{align}\label{2.5}
&\left[T^{(m)}_{n,k}\right]=\nonumber\\
&\left[\begin{array}{llllll}
1 &  &  &  &  &  \\ 
1 & m&  &  &  &  \\ 
1 & 4m& m(2m-1)&  &  &   \\ 
1 & 11m & m(18m-7)& m(6m^2-7m+2) &  &  \\ 
1 & 26m & 2m(49m-16)& 2m(48m^2-46m+11) & m(4m-3)(6m^2-7m+2) &  \\ 
   &  & \cdots  &  &  & 
\end{array}
\right].
\end{align}
}
Therefore, for $m=1,2,$ and $3$, we have 

\be\label{2.6}
\left[T^{(1)}_{n,k}\right]_{0\leq k \leq n-1}=\left[\begin{array}{ccccccc}
1 &  &  &  &  &  &\\ 
1 & 1 &  &  &  &  &\\ 
1 & 4& 1 &  &  & &\cdots  \\ 
1 & 11 & 11& 1 &  & & \\ 
1 & 26 & 66& 26 & 1 &&  \\ 
   &  & \cdots  &  &  & &
\end{array}
\right].
\ee

\be\label{2.7}
\left[T^{(2)}_{n,k}\right]_{0\leq k \leq n-1}=\left[\begin{array}{ccccccc}
1 &  &  &  &  &  &\\ 
1 & 2 &  &  &  &  &\\ 
1 & 8& 6 &  &  & &\cdots  \\ 
1 & 22 & 58& 24 &  & & \\ 
1 & 52 & 328& 444 & 120 &&  \\ 
   &  & \cdots  &  &  & &
\end{array}
\right].
\ee

\be\label{2.8}
\left[T^{(3)}_{n,k}\right]_{0\leq k \leq n-1}=\left[\begin{array}{ccccccc}
1 &  &  &  &  &  &\\ 
1 & 3 &  &  &  &  &\\ 
1 & 12& 15 &  &  & &\cdots  \\ 
1 & 33 & 141& 105 &  & & \\ 
1 & 78 & 786& 1830 & 945 &&  \\ 
   &  & \cdots  &  &  & &
\end{array}
\right].
\ee

\be\label{2.8-2}
\left[T^{(4)}_{n,k}\right]_{0\leq k \leq n-1}=\left[\begin{array}{ccccccc}
1 &  &  &  &  &  &\\ 
1 & 4 &  &  &  &  &\\ 
1 & 16& 28 &  &  & &\cdots  \\ 
1 & 44 & 260& 280 &  & & \\ 
1 & 104 & 1440& 4760 & 3640 &&  \\ 
   &  & \cdots  &  &  & &
\end{array}
\right].
\ee

A $3$-Stirling permutation of order $n$ is a permutation of the multiset $\{1,1,1,2,2,2,...,n,n,n\}$ such that for each $i$, $1\leq i\leq n$, the elements occurring between two occurrences of $i$ are at most $i$. The triangle enumerates $3$-Stirling permutations by descents form third-order Eulerian numbers. Clearly, the entries of the triangle satisfy the recurrence equation, $T^{(3)}_{n+1,k}= (k + 1)T^{(3)}_{n,k} + (3n-k-2)T^{(3)}_{n,k-1}$, which is the sequence $A219512$ shown in OEIS \cite{Sloane}

The $4$-Stirling permutation of order $n$ is a permutation of the multiset $\{1,1,1,1,2,2,2,2,...,n,n,n,n\}$ such that for each $i$, $1\leq i\leq n$, the elements occurring between two occurrences of $i$ are at most $i$. The triangle enumerated by $4$-Stirling permutation of order $n$ by descents, i.e., $(T^{(4)}_{n,k})_{0\leq k\leq n-1}$ with $k$ descents, form 
fourth-order Eulerian numbers. The triangle possesses the recurrence equation, $T^{(4)}_{n+1,k}= (k + 1)T^{(4)}_{n,k} + (4n-k-3)T^{(3)}_{n,k-1}$, which first few entries are $\{1,1,4,1,16,28,1,44,260,$ $280, 1,104,1440,4760,3640,\ldots\}$. The fourth order Eulerian number sequence is unknown in \cite{Sloane}. 

\begin{theorem}\label{thm:2.3}
Let $T^{(m)}_{n,k}$ be the $m$th-order Eulerian numbers defined in Definition \ref{def:2.1}. Denote by 

\be\label{2.9}
S_{m;n}(t)=\sum^{n-1}_{k=0}T^{(m)}_{n,k} t^k,
\ee
the $m$th-order Eulerian polynomials, where $S_{1;n}(t)$ are the classical Eulerian polynomials. Then 

\be\label{2.10}
S_{m;n+1}(t)=(1+mn t)S_{m;n}(t)+t(1-t)S'_{m;n}(t).
\ee
\end{theorem}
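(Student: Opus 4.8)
The plan is to substitute the recurrence relation \eqref{2.1} for the $m$th-order Eulerian numbers directly into the definition \eqref{2.9} of the Eulerian polynomial, and then to recognise the resulting sums as $S_{m;n}(t)$, $S'_{m;n}(t)$ multiplied by appropriate powers of $t$. First I would shift the top index in \eqref{2.1} by replacing $n$ with $n+1$: since $m(n+1)-k-m+1 = mn-k+1$, this gives $T^{(m)}_{n+1,k}=(k+1)T^{(m)}_{n,k}+(mn-k+1)T^{(m)}_{n,k-1}$. Inserting this into $S_{m;n+1}(t)=\sum_{k=0}^{n}T^{(m)}_{n+1,k}t^k$ and splitting along the two summands produces two sums, call them $A$ and $B$.

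For the first sum $A=\sum_k (k+1)T^{(m)}_{n,k}t^k$, I would use the vanishing convention $T^{(m)}_{n,k}=0$ for $k\geq n$ to adjust the index range freely, and then observe that $\sum_k k\,T^{(m)}_{n,k}t^k = tS'_{m;n}(t)$ while $\sum_k T^{(m)}_{n,k}t^k = S_{m;n}(t)$, so that $A=S_{m;n}(t)+tS'_{m;n}(t)$. For the second sum I would perform the index shift $j=k-1$; invoking $T^{(m)}_{n,-1}=0$ to discard the boundary term, this yields $B=t\sum_j (mn-j)T^{(m)}_{n,j}t^j = mnt\,S_{m;n}(t)-t^2S'_{m;n}(t)$. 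Adding $A$ and $B$ and collecting the coefficients of $S_{m;n}(t)$ and $S'_{m;n}(t)$ then gives exactly $(1+mnt)S_{m;n}(t)+t(1-t)S'_{m;n}(t)$, as claimed in \eqref{2.10}.

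The computation is essentially bookkeeping, and the only point that demands a little care is the treatment of the boundary indices (the term $k=n$ in $A$ and the term $k=0$ in $B$), where the vanishing conventions for $T^{(m)}_{n,k}$ must be invoked to make the summation ranges match the definitions of $S_{m;n}$ and $S'_{m;n}$. I expect this index reconciliation to be the main obstacle, though it is a mild one; no combinatorial input beyond Theorem \ref{thm:2.2} is needed.
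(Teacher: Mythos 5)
Your proposal is correct and is essentially the paper's own argument run in the opposite direction: the paper expands $(1+mnt)S_{m;n}(t)+t(1-t)S'_{m;n}(t)$ and recognises the coefficient of $t^k$ as the recurrence \eqref{2.1}, while you substitute \eqref{2.1} into $S_{m;n+1}(t)$ and collect the terms into $S_{m;n}$ and $S'_{m;n}$. The index bookkeeping you flag (the $k=n$ and $j=-1$ boundary terms) is handled exactly as you describe, via the vanishing conventions, so there is no gap.
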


\begin{proof}
From \eqref{2.9}, we have 

\[
S'_{m;k}(t)=\sum^{n-1}_{k=0}kT^{(m)}_{n,k} t^{k-1}, 
\]

\[
t(1-t)S'_{m;k}(t)=\sum^{n-1}_{k=0}kT^{(m)}_{n,k} t^{k}-\sum^{n}_{k=1}(k-1)T^{(m)}_{n,k-1} t^{k},
\]
and 

\begin{align*}
&(1+mnt)S_{m;n}(t)=(1+mnt)\sum^{n-1}_{k=0}T^{(m)}_{n,k}t^k\\
=& \sum^{n-1}_{k=0}T^{(m)}_{n,k}t^k+\sum^{n}_{k=1}mnT^{(m)}_{n,k-1}t^k.
\end{align*}
Hence, noting \eqref{2.1}, we have 

\begin{align*}
&(1+mnt)S_{m;n}(t)+t(1-t)S'_{m;n}(t)\\
=&\sum^{n-1}_{k=0}(k+1)T^{(m)}_{n,k} t^{k}
+\sum^{n}_{k=1}(mn-k+1)T^{(m)}_{n,k-1} t^{k}\\
=&\sum^{n}_{k=0}\left((k+1)T^{(m)}_{n,k} +(mn-k+1)T^{(m)}_{n,k-1}\right)t^{k}\\
=&\sum^{n}_{k=0}T^{(m)}_{n+1,k} t^{k},
\end{align*}
which yields \eqref{2.10}.
\end{proof}

\begin{corollary}\label{cor:2.4}
Let $T^{(m)}_{n,k}$ be the $m$th-order Eulerian numbers defined in Definition \ref{def:2.1}. Then

\begin{align}\label{2.11}
&\sum^n_{k=0}T^{(m)}_{n+1,k}=(1+nm)\sum^{n-1}_{k=0}T^{(m)}_{n,k},\\
&\sum^{n-1}_{k=0} T^{(m)}_{n,k}=1\cdot (m+1)\cdot (2m+1)\cdots ((n-1)m+1).\label{2.12}
\end{align}
\end{corollary}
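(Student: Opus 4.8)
The plan is to derive both identities from Theorem \ref{thm:2.3}, which already packages all the row information of the triangle into the functional equation \eqref{2.10}. Throughout I write $R_n=\sum_{k=0}^{n-1}T^{(m)}_{n,k}=S_{m;n}(1)$ for the $n$th row sum, so that \eqref{2.11} is the assertion $R_{n+1}=(1+nm)R_n$ and \eqref{2.12} is the assertion $R_n=\prod_{j=0}^{n-1}(1+jm)$.

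For \eqref{2.11} the quickest route is simply to evaluate \eqref{2.10} at $t=1$. The term $t(1-t)S'_{m;n}(t)$ carries the factor $(1-t)$, so it vanishes at $t=1$ regardless of the value of $S'_{m;n}(1)$, while the remaining factor $(1+mnt)$ becomes $1+mn$. Hence $S_{m;n+1}(1)=(1+mn)S_{m;n}(1)$, which is exactly \eqref{2.11}. If one prefers to avoid the generating function, the same identity comes directly from the shifted recurrence $T^{(m)}_{n+1,k}=(k+1)T^{(m)}_{n,k}+(mn-k+1)T^{(m)}_{n,k-1}$ established inside the proof of Theorem \ref{thm:2.3}: summing over $k$, shifting the summation index of the second sum down by one, and using the boundary vanishing $T^{(m)}_{n,-1}=T^{(m)}_{n,n}=0$, the coefficient of each surviving $T^{(m)}_{n,k}$ collapses to $(k+1)+(mn-k)=mn+1$, a constant, which factors out to give \eqref{2.11}.

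For \eqref{2.12} I would then run a short induction on $n$ using \eqref{2.11}. The base case $n=1$ reads $R_1=T^{(m)}_{1,0}=1$, matching the product, whose single factor $(1+0\cdot m)$ equals $1$. Assuming $R_n=\prod_{j=0}^{n-1}(1+jm)$, relation \eqref{2.11} gives $R_{n+1}=(1+nm)R_n=\prod_{j=0}^{n}(1+jm)$, completing the induction. Equivalently, the recurrence telescopes at once to $R_n=R_1\prod_{j=1}^{n-1}(1+jm)=\prod_{j=0}^{n-1}(1+jm)$, which is the stated right-hand side of \eqref{2.12}.

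I anticipate no real obstacle: both parts are formal consequences of Theorem \ref{thm:2.3}, and the only points demanding care are clerical — confirming that the derivative term genuinely drops out at $t=1$ (it does, thanks to the $(1-t)$ factor) and keeping the summation endpoints honest so that the $k$-dependence cancels cleanly. The one presentational choice is whether to obtain \eqref{2.11} via the polynomial identity or via the raw recurrence; I would lead with the substitution $t=1$ into \eqref{2.10}, since it makes the vanishing of the derivative term transparent and reduces \eqref{2.11} to a single evaluation.
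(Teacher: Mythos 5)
Your proposal is correct and follows essentially the same route as the paper: substitute $t=1$ into \eqref{2.10} so the derivative term vanishes, obtaining $S_{m;n+1}(1)=(1+mn)S_{m;n}(1)$, and then iterate from $T^{(m)}_{1,0}=1$ to get the product formula \eqref{2.12}. The alternative derivation of \eqref{2.11} directly from the recurrence is a fine supplement but not needed.
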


\begin{proof}
Substituting $t=1$ into \eqref{2.10} yields

\be\label{2.11-1}
S_{m;n+1}(1)=(1+mn)S_{m;n}(1),
\ee
i.e.

\[
\sum^n_{k=0}T^{(m)}_{n+1,k}=(1+nm)\sum^{n-1}_{k=0}T^{(m)}_{n,k}.
\]
From the last expression and noting $T^{(m)}_{1,0}=1$, we obtain \eqref{2.12}.
\end{proof}

The $m$-th order Eulerian polynomials and their $q$-forms will be shown in subsequent articles. 

\section{$m$th-order Eulerian fraction}

We now define $m$th-order Eulerian fraction and its useful alternative form.

\begin{definition}\label{def:3.1}
Let $S_{m;n}(t)$ be the $m$th-order Eulerian polynomials defined by \eqref{2.9}. We define the $m$th-order Eulerian fraction by 

\be\label{3.1}
F_{m;n}(t)=\frac{S_{m;n}(t)}{(1- t)^{m(n-1)+2}},
\ee
and an alternative form of \eqref{3.1} is defined by 

\be\label{3.1-2}
\widehat F_{m;n}(t)=\frac{tS_{m;n}(t)}{(1- t)^{mn+1}}.
\ee
Particularly, the corresponding first-order Eulerian fraction and its alternative form  

\begin{align*}
&F_{1;n}(t)=\frac{S_{1;n}(t)}{(1-t)^{n+1}}\quad and\\
&\widehat F_{1;n}(t)=\frac{tS_{1;n}(t)}{(1-t)^{n+1}}
\end{align*}
are the (classical) Eulerian fractions (cf. for example, \cite{Com, HHST}).
\end{definition}

Here, $F_{2;n}(t)$ is the second-order extension of the Eulerain fraction defined in \cite{Osu}, while $\widehat F_{2;n}(t)$ is the second-order extension of the Eulerian fraction defined in \cite{GS}.

We now give the relationship between two forms of Eulerian fractions and consider some properties of those $m$th-order Eulerian fractions represented by using differentiation and integration.
 
\begin{theorem}\label{thm:3.2}
Let $S_{m;n}(t)$ be the $m$th-order Eulerian polynomials defined by \eqref{2.9}, and let $F_{m;n}(t)$ and $\widehat F_{m;n}(t)$ be the $m$th-order Eulerian fraction and its alternative form defined by \eqref{3.1} and \eqref{3.1-2}, respectively. Then 

\begin{align} 
&\widehat F_{m;n}(t)=\frac{t}{(1-t)^{m-1}}F_{m;n}(t),\label{3.2}\\
&\frac{d}{dt} \left( \frac{t}{(1- t)^{m-1}}F_{m;n}(t)\right)=F_{m;n+1}(t),\label{3.2-2}\\
&t\frac{d}{dt}\widehat F_{m;n}(t)=(1-t)^{m-1}\widehat F_{m;n+1}(t).\label{3.2-3}
\end{align}
Particularly, for the first-order Eulerian fraction 

\begin{align*}
&\widehat F_{1;n}(t)=tF_{1;n}(t),\\
&\frac{d}{dt}(tF_{1;n}(t))=F_{1;n+1}(t),\\
&t\frac{t}{dt}\widehat F_{1;n}(t)=\widehat F_{1;n+1}(t).
\end{align*}
\end{theorem}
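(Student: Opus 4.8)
The plan is to establish the three identities in sequence, deriving each from its predecessor so that the only genuine computation is the one hidden inside \eqref{3.2-2}. First I would prove \eqref{3.2} directly from the definitions. Comparing \eqref{3.1} and \eqref{3.1-2}, both fractions share the numerator factor $S_{m;n}(t)$, so the ratio $\widehat F_{m;n}(t)/F_{m;n}(t)$ collapses to a ratio of powers of $(1-t)$ times a factor of $t$. Explicitly, the exponent in \eqref{3.1} is $m(n-1)+2 = mn-m+2$ while the exponent in \eqref{3.1-2} is $mn+1$; the difference is $mn+1-(mn-m+2) = m-1$. Hence
\begin{equation*}
\widehat F_{m;n}(t)=\frac{tS_{m;n}(t)}{(1-t)^{mn+1}}=\frac{t}{(1-t)^{m-1}}\cdot\frac{S_{m;n}(t)}{(1-t)^{mn-m+2}}=\frac{t}{(1-t)^{m-1}}F_{m;n}(t),
\end{equation*}
which is \eqref{3.2}. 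This first step is purely bookkeeping of exponents and carries no obstacle.

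The substantive step is \eqref{3.2-2}, and this is where I would concentrate the real work. By \eqref{3.2}, the quantity $\frac{t}{(1-t)^{m-1}}F_{m;n}(t)$ equals $\widehat F_{m;n}(t)=tS_{m;n}(t)/(1-t)^{mn+1}$, so I would differentiate this last explicit rational expression using the quotient (or product) rule. The derivative produces two terms: one from differentiating the numerator $tS_{m;n}(t)$, giving $\bigl(S_{m;n}(t)+tS'_{m;n}(t)\bigr)/(1-t)^{mn+1}$, and one from differentiating $(1-t)^{-(mn+1)}$, giving $(mn+1)\,tS_{m;n}(t)/(1-t)^{mn+2}$. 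Putting both over the common denominator $(1-t)^{mn+2}$ and expanding the factor $(1-t)$ on the first term, the numerator becomes
\begin{equation*}
(1-t)\bigl(S_{m;n}(t)+tS'_{m;n}(t)\bigr)+(mn+1)tS_{m;n}(t).
\end{equation*}
The crux is to recognize this numerator as $S_{m;n+1}(t)$, which is exactly what the recurrence \eqref{2.10} delivers: regrouping the terms above gives $(1+mnt)S_{m;n}(t)+t(1-t)S'_{m;n}(t)$, matching \eqref{2.10} verbatim. Since the target denominator $(1-t)^{mn+2}$ equals $(1-t)^{m(n+1-1)+2}$, the result is precisely $F_{m;n+1}(t)$ as defined by \eqref{3.1}. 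The main obstacle is this algebraic reorganization of the derivative into the shape of \eqref{2.10}; it is routine but must be done carefully to track the $(1-t)$ factors and confirm the denominator exponent increments correctly from $mn+1$ to $m(n+1-1)+2$.

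Finally, \eqref{3.2-3} follows by combining the first two identities without further appeal to the recurrence. Rewriting \eqref{3.2-2} using \eqref{3.2} on both the argument and the output, I have $\frac{d}{dt}\widehat F_{m;n}(t)=F_{m;n+1}(t)$, and then applying \eqref{3.2} at level $n+1$ gives $\widehat F_{m;n+1}(t)=\frac{t}{(1-t)^{m-1}}F_{m;n+1}(t)$, so that $F_{m;n+1}(t)=\frac{(1-t)^{m-1}}{t}\widehat F_{m;n+1}(t)$. Multiplying $\frac{d}{dt}\widehat F_{m;n}(t)=F_{m;n+1}(t)$ through by $t$ and substituting this last expression yields $t\frac{d}{dt}\widehat F_{m;n}(t)=(1-t)^{m-1}\widehat F_{m;n+1}(t)$, which is \eqref{3.2-3}. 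The specializations to the first-order case in the displayed equations all follow by setting $m=1$, which sends $(1-t)^{m-1}$ to $1$ and reduces each identity to its classical Eulerian-fraction counterpart. I expect no difficulty here beyond the substitution.
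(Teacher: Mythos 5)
Your proposal is correct and follows essentially the same route as the paper: equation \eqref{3.2} by comparing exponents, equation \eqref{3.2-2} by reducing the derivative of $tS_{m;n}(t)/(1-t)^{mn+1}$ to the recurrence \eqref{2.10} (the paper runs this same computation in the opposite direction, dividing \eqref{2.10} by $(1-t)^{mn+2}$ and recognizing the product-rule structure), and \eqref{3.2-3} by combining the first two identities. No gaps.
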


\begin{proof}
Equation \eqref{3.2} is obvious. 

To prove \eqref{3.2-2}, we divid both sides of \eqref{2.10} by $(1- t)^{mn+2}$ to get  

\begin{align*}
&F_{m;n+1}(t)=\frac{S_{m;n+1}(t)}{(1- t)^{mn+2}}\\
=&\frac{1+mnt}{(1-t)^{mn+2}}S_{m;n}(t)+\frac{t}{(1- t)^{mn+1}}S'_{m;n}(t)\\
=& \frac{d}{dt} \left(\frac{t}{(1- t)^{mn+1}}\right)S_{m;n}(t)+\frac{t}{(1- t)^{mn+1}}S'_{m;n}(t)\\
=&\frac{d}{dt} \left( \frac{t}{(1- t)^{m-1}}\frac{S_{m;n}(t)}{(1- t)^{m(n-1)+2}}\right),
\end{align*}
which implies \eqref{3.2} by noting \eqref{3.1}.

From \eqref{3.2} we have 

\begin{align*}
&t\frac{d}{dt}\widehat F_{m;n}(t)=t\frac{d}{dt} \left( \frac{t}{(1- t)^{m-1}}F_{m;n}(t)\right)\\
=&tF_{m;n+1}(t)=t\frac{(1-t)^{m-1}}{t}\widehat F_{m;n+1}(t),
\end{align*}
which yields \eqref{3.2-3}.
\end{proof}

For $m=2$, \eqref{3.2-2} reduces to a result in \cite{Osu}:

\[
\left( \frac{t}{1-t}F_{2;n}(t)\right)'=F_{2;n+1}(t).
\]

For $m=2$, \eqref{3.2-3} becomes to 

\[
t\frac{t}{dt}\widehat F_{2;n}(t)=\widehat F_{2;n+1}(t),
\]
which is shown in \cite{GS}. 

\begin{proposition}\label{pro:3.3}
Let $F_{m;n}(t)$ and $\widehat F_{m;n}(t)$ be the two forms of Eulerian fractions defined by \eqref{3.1} and \eqref{3.1-2}, respectively, and let $m$, $n$, $a$, and $b$ be the integers with $m, n\geq 1$, $a\geq 0$, and $n+a+b\geq 0$. Then 

\begin{align}
&\int^0_{-\infty}t^a(1- t)^{-a-b-2}F_{m;n}(t)dt\nonumber\\
=&\frac{(-1)^a}{mn+a+b+1}\sum^{n-1}_{k=0}
(-1)^kT^{(m)}_{n,k}\binom{mn+a+b}{a+k}^{-1}, \label{3.3}\\
&\int^0_{-\infty}t^{a-1}(1- t)^{m-a-b-3}\widehat F_{m;n}(t)dt\nonumber\\
=&\frac{(-1)^a}{mn+a+b+1}\sum^{n-1}_{k=0}
(-1)^kT^{(m)}_{n,k}\binom{mn+a+b}{a+k}^{-1}. \label{3.3-2}
\end{align}
\end{proposition}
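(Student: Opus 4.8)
The plan is to prove \eqref{3.3} directly by substituting the definition \eqref{3.1} of $F_{m;n}(t)$ and then integrating term by term against a Beta-type kernel. Writing $F_{m;n}(t)=S_{m;n}(t)/(1-t)^{m(n-1)+2}=\sum_{k=0}^{n-1}T^{(m)}_{n,k}\,t^k(1-t)^{-m(n-1)-2}$, the integrand on the left of \eqref{3.3} becomes
\[
\sum_{k=0}^{n-1}T^{(m)}_{n,k}\,t^{a+k}(1-t)^{-a-b-m(n-1)-4}.
\]
Thus the whole computation reduces to evaluating, for each fixed $k$, the single integral
\[
I_k=\int^0_{-\infty}t^{a+k}(1-t)^{-a-b-m(n-1)-4}\,dt,
\]
and then assembling $\sum_k T^{(m)}_{n,k}I_k$.

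First I would reduce $I_k$ to a standard Beta integral. Substituting $t=-u/(1-u)$ (equivalently $u=t/(t-1)$), which maps $t\in(-\infty,0]$ bijectively to $u\in[0,1)$, one finds $1-t=1/(1-u)$ and $dt=-du/(1-u)^2$, so that powers of $t$ and $(1-t)$ convert into powers of $u$ and $(1-u)$. After simplification $I_k$ should take the form $(-1)^{a+k}B(a+k+1,\,\beta)$ for an appropriate second Beta-argument $\beta$ determined by the exponents; the factor $(-1)^{a+k}$ comes from $t^{a+k}=(-1)^{a+k}(u/(1-u))^{a+k}$. Expressing $B(a+k+1,\beta)=\Gamma(a+k+1)\Gamma(\beta)/\Gamma(a+k+1+\beta)$ and checking that the total exponent sum telescopes to give denominator $\Gamma(mn+a+b+2)=(mn+a+b+1)!$ is the key bookkeeping step. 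The convergence at $-\infty$ is guaranteed by the hypotheses ($a\geq0$, $n+a+b\geq0$, $m,n\geq1$), which keep the negative power of $(1-t)$ large enough.

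Collecting the $\Gamma$-factors, I expect each term to reduce to
\[
T^{(m)}_{n,k}I_k=\frac{(-1)^a(-1)^k}{mn+a+b+1}\,T^{(m)}_{n,k}\binom{mn+a+b}{a+k}^{-1},
\]
since $\binom{mn+a+b}{a+k}^{-1}=\dfrac{(a+k)!\,(mn+b-k-1+m(n-1)+\cdots)!}{(mn+a+b)!}$ will match the ratio of Gamma functions after the exponents are tallied. Summing over $k$ then yields \eqref{3.3} exactly. For \eqref{3.3-2}, I would not redo the integral: using the relation \eqref{3.2}, namely $\widehat F_{m;n}(t)=\frac{t}{(1-t)^{m-1}}F_{m;n}(t)$, the integrand $t^{a-1}(1-t)^{m-a-b-3}\widehat F_{m;n}(t)$ simplifies to $t^{a}(1-t)^{-a-b-2}F_{m;n}(t)$, which is precisely the integrand of \eqref{3.3}; hence \eqref{3.3-2} follows immediately from \eqref{3.3} with identical right-hand side.

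The main obstacle I anticipate is purely bookkeeping: correctly tracking the exponent of $(1-t)$ through the substitution so that the two Gamma-function arguments combine into the stated inverse binomial coefficient $\binom{mn+a+b}{a+k}^{-1}$ with the prefactor $1/(mn+a+b+1)$. The sign $(-1)^{a+k}$ and the convergence domain are straightforward, but verifying that $a+k+1$ and the complementary exponent sum to exactly $mn+a+b+2$ (so the denominator is independent of $k$, as the formula demands) is the delicate point and the place where an off-by-one error is most likely; I would pin down the exponent of $(1-t)$ in the integrand carefully before invoking the Beta function.
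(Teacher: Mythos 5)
Your strategy is the same as the paper's: expand $F_{m;n}$ into its defining sum, convert each term to a Beta integral via the substitution $u=t/(t-1)$ (the paper runs the identical substitution in the opposite direction, starting from $\int_0^1 t^a(1-t)^b F_{m;n}(t/(t-1))\,dt$ and transforming back to the half-line), and deduce \eqref{3.3-2} from \eqref{3.3} using \eqref{3.2}. Your reduction of the integrand of \eqref{3.3-2} to that of \eqref{3.3} is correct and is exactly what the paper does.

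However, the one step you defer---verifying that the two Beta arguments sum to $mn+a+b+2$---is precisely where the argument breaks, so it cannot be waved through as bookkeeping. With \eqref{3.1} as stated, the integrand of \eqref{3.3} is $\sum_k T^{(m)}_{n,k}\,t^{a+k}(1-t)^{-(a+b+m(n-1)+4)}$, and your substitution gives $I_k=(-1)^{a+k}B\bigl(a+k+1,\;b+m(n-1)+3-k\bigr)$, whose Gamma-function denominator is $\Gamma(a+b+m(n-1)+4)=\Gamma(mn+a+b+4-m)$. This equals the required $\Gamma(mn+a+b+2)$ only when $m=2$. A concrete check: for $m=n=1$ and $a=b=0$ the left side of \eqref{3.3} is $\int_{-\infty}^0(1-t)^{-4}\,dt=1/3$, while the right side is $1/2$. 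So the term-by-term identity you assert cannot be assembled as claimed for general $m$. (The paper's own proof has the same defect: it silently writes the denominator of $F_{m;n}(t/(t-1))$ as $(1-t/(t-1))^{mn}$ rather than $(1-t/(t-1))^{m(n-1)+2}$.) To complete the proof you must either take the exponent in \eqref{3.1} to be $mn$, or replace $mn$ by $m(n-1)+2$ throughout the right-hand sides of \eqref{3.3} and \eqref{3.3-2}; as written, your plan establishes a formula that agrees with the proposition only at $m=2$.
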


\begin{proof}
Noticing that $S_{m;n}(t)$ and $F_{m;n}(t)$ are the functions defined by \eqref{2.9} and \eqref{3.1}, respectively, we may calculating the integral 

\begin{align*}
&\int^{1}_0t^a(1- t)^bF_{m;n}\left( \frac{t}{t-1}\right)dt\\
=&\sum^{n-1}_{k=0}T^{(m)}_{n,k} \int^{1}_{0} t^a (1- t)^b \frac{\left(\frac{t}{ t-1}\right)^k}{(1- (t/( t-1)))^{mn}}\\
=&\sum^{n-1}_{k=0}(-1)^kT^{(m)}_{n,k} \int^{1}_{0} t^{a+k} (1- t)^{b-k+mn}dt\\
=& \sum^{n-1}_{k=0}
(-1)^kT^{(m)}_{n,k}\frac{\Gamma(a+k+1)\Gamma (b-k+mn+1)}{\Gamma (a+b+mn+2)},
\end{align*}
which gives a $(-1)^a$ multiple of the right-hand side of \eqref{3.3}. Using the transformation $x=t/(t-1)$ into the leftmost side integral of the last equations, we have 

\begin{align*}
&\int^{1}_0t^a(1- t)^bF_{m;n}\left( \frac{t}{ t-1}\right)dt\\
=&\int^{-\infty}_{0}\left(\frac{x}{ x-1}\right)^a\left( 1-\frac{ x}{ x-1}\right)^bF_{m;n}(x)d\frac{x}{x-1}\\
=&(-1)^a\int^0_{-\infty} x^a(1- x)^{-a-b-2}F_{m;n}(x)dx,
\end{align*}
which is a $(-1)^a$ multiple of the left-hand side of \eqref{3.3}. 

Equation \eqref{3.3-2} follows from \eqref{3.3} and \eqref{3.2}, which completes the proof of the proposition. 
\end{proof}

\begin{theorem}\label{thm:3.4}
Let $\widehat F_{m;n}(t)$ be the Eulerian fraction defined by \eqref{3.1-2}. Denote $\widehat F_{m;n}(t)=\sum_{\ell \geq 0} f_{m;n}(\ell)t^\ell$. Then 

\be\label{3.4}
f_{m;n}(\ell)=\sum^{\ell-1}_{k=0}T^{(m)}_{n,k}\binom{mn+\ell-k-1}{mn},
\ee
where $T^{(m)}_{n,k}$ are the $m$th-order Eulerian numbers defined by Definition \ref{def:2.1}. 

Particularly, for $m=2$ we have 

\be\label{3.4-2}
f_{2;n}(\ell)=\sum^{\ell-1}_{k=0}T^{(2)}_{n,k}\binom{2n+\ell-k-1}{2n},
\ee
which implies 

\be\label{3.4-3}
S(n+\ell, \ell)=\sum^{\ell-1}_{k=0}T^{(2)}_{n,k}\binom{2n+\ell-k-1}{2n},
\ee
where $S(u,v)$ is the $(u,v)$th Stirling number of the second kind. Since $S(u,v)=S(u-v+v,v)$, \eqref{3.4-3} gives 
a scheme for computing $S(u,v)$ by substituting in $(n,\ell)=(u-v, v)$. 
\end{theorem}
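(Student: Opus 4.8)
The plan is to treat \eqref{3.4} as a pure generating-function identity and then to extract \eqref{3.4-3} from the differential recurrence already established in Theorem \ref{thm:3.2}. For \eqref{3.4} I would simply read off the Cauchy product that defines the coefficients of $\widehat F_{m;n}$. Writing the numerator as $tS_{m;n}(t)=\sum_{k=0}^{n-1}T^{(m)}_{n,k}t^{k+1}$ and expanding the denominator by the negative binomial series
\[
\frac{1}{(1-t)^{mn+1}}=\sum_{j\geq 0}\binom{mn+j}{mn}t^{j},
\]
the coefficient of $t^\ell$ in the product is obtained by matching $k+1+j=\ell$, i.e. $j=\ell-k-1\geq 0$. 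This forces $0\leq k\leq \ell-1$ and yields
\[
f_{m;n}(\ell)=\sum_{k=0}^{\ell-1}T^{(m)}_{n,k}\binom{mn+\ell-k-1}{mn},
\]
which is exactly \eqref{3.4}. Since $T^{(m)}_{n,k}=0$ for $k\geq n$, the stated upper limit $\ell-1$ is harmless. Specializing $m=2$ gives \eqref{3.4-2} with no further work.

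The substance is the identification \eqref{3.4-3}, namely $f_{2;n}(\ell)=S(n+\ell,\ell)$; equivalently I must show that the Stirling diagonal generating function $g_n(t):=\sum_{\ell\geq 0}S(n+\ell,\ell)t^\ell$ coincides with $\widehat F_{2;n}(t)$. My plan is an induction on $n$ driven by the recurrence \eqref{3.2-3}, which for $m=2$ reads $t\frac{d}{dt}\widehat F_{2;n}(t)=(1-t)\widehat F_{2;n+1}(t)$. The key step is to verify that $g_n$ satisfies the very same recurrence. Starting from the classical Stirling recurrence $S(u,v)=vS(u-1,v)+S(u-1,v-1)$ with $u=n+\ell$, $v=\ell$, I get $S(n+\ell,\ell)=\ell\,S((n-1)+\ell,\ell)+S(n+(\ell-1),\ell-1)$. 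Multiplying by $t^\ell$ and summing over $\ell$, the first term contributes $t\frac{d}{dt}g_{n-1}(t)$ and the second, after shifting $\ell\mapsto\ell-1$, contributes $t\,g_n(t)$, so that $(1-t)g_n(t)=t\frac{d}{dt}g_{n-1}(t)$, which is precisely \eqref{3.2-3} reindexed.

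For the base case $n=1$ I would compute $S_{2;1}(t)=T^{(2)}_{1,0}=1$, hence $\widehat F_{2;1}(t)=t/(1-t)^3=\sum_{\ell\geq 0}\binom{\ell+1}{2}t^\ell$, and match this against $S(\ell+1,\ell)=\binom{\ell+1}{2}$, so $g_1=\widehat F_{2;1}$. Since the recurrence determines $g_n$ uniquely from $g_{n-1}$ (indeed $g_n=\frac{t}{1-t}\frac{d}{dt}g_{n-1}$), induction gives $g_n=\widehat F_{2;n}$ for all $n\geq 1$, whence \eqref{3.4-3}; the closing remark about $(n,\ell)=(u-v,v)$ is then a trivial reparametrization. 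I expect the main obstacle to be bookkeeping rather than conceptual: one must track the index shifts carefully (in particular the $\ell=0$ boundary, where $S(n-1,-1)=0$ keeps the shifted sum honest) and confirm that the two displayed recurrences for series are literally identical after reindexing, so that the induction closes.
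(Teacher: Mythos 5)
Your proof of \eqref{3.4} is correct and is essentially identical to the paper's: both expand $(1-t)^{-mn-1}$ by the negative binomial series and read off the coefficient of $t^\ell$ in the Cauchy product with $tS_{m;n}(t)$. Where you genuinely diverge is in \eqref{3.4-3}: the paper does not prove the identification $f_{2;n}(\ell)=S(n+\ell,\ell)$ at all, but simply quotes the generating-function identity $\widehat F_{2;n}(t)=\sum_{\ell\geq 0}S(n+\ell,\ell)t^\ell$ from Gessel and Stanley \cite{GS}. You instead give a self-contained induction: you check the base case $\widehat F_{2;1}(t)=t/(1-t)^3=\sum_\ell\binom{\ell+1}{2}t^\ell=\sum_\ell S(\ell+1,\ell)t^\ell$, derive from $S(u,v)=vS(u-1,v)+S(u-1,v-1)$ that $g_n(t)=\sum_\ell S(n+\ell,\ell)t^\ell$ satisfies $(1-t)g_n=t\,g_{n-1}'$, and observe that this is exactly the $m=2$ case of \eqref{3.2-3} after reindexing, so uniqueness of the solution (division by the invertible series $1-t$) closes the induction. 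I verified the index bookkeeping, including the $\ell=0$ boundary, and it is sound. Your route costs a page of recurrence-chasing but buys independence from \cite{GS} and makes the theorem's logical dependence on Theorem \ref{thm:3.2} explicit; the paper's route is shorter but leaves the crux of \eqref{3.4-3} as an external citation.
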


\begin{proof}
From \eqref{3.1-2}, 

\begin{align*}
\widehat F_{m;n}(t)=&(1-x)^{-mn-1}\sum^n_{k=1}T^{(m)}_{n,k-1}x^k\\
=&\sum^n_{k=1} \sum_{j\geq 0}T^{(m)}_{n,k-1}\binom{mn+j}{j}x^{k+j}\\
=&\sum^n_{k=1} \sum_{\ell \geq k}T^{(m)}_{n,k-1}\binom{mn+\ell-k}{mn}x^{\ell}\\
=&\sum_{\ell \geq 1}\sum^\ell_{k=1}T^{(m)}_{n,k-1}\binom{mn+\ell-k}{mn}x^{\ell}\\
=&\sum_{\ell \geq 1}\sum^{\ell-1}_{k=0}T^{(m)}_{n,}\binom{mn+\ell-k-1}{mn}x^{\ell}.
\end{align*}
Denote $\widehat F_{m;n}(t)=\sum_{\ell \geq 0} f_{m;n}(\ell)t^\ell$. By comparing two expressions of $\widehat F_{m;n}$, we may obtain \eqref{3.4}. 

If $m=2$, then \eqref{3.4} reduces to \eqref{3.4-2}. From \cite{GS}, we have 

\[
\widehat F_{2;n}(t)=\sum_{\ell \geq 0} f_{2;n}(\ell)t^\ell=\sum_{\ell \geq 0} S(\ell+n,\ell)t^\ell,
\]
where $S(n,k)$ is a Stirling number of the second kind, which derives \eqref{3.4-3}.
\end{proof}

\begin{example}
For examples, to find $S(3,1)=S(2+1,1)$, $S(5,2)=S(3+2,2)$ from \eqref{3.4-3}, we set in $\widehat F_{2;n}(\ell)$ $(n,\ell)=(2,1)$ and $(3,2)$, respectively, and have 

\begin{align*}
&S(3,1)=S(1+2,1)=\widehat F_{2;2}(1)=\sum^{1-1}_{k=0}T^{(2)}_{2,k}\binom{4+1-k-1}{4}\\
\qquad &=T^{(2)}_{2,0}\cdot 1=1,\\
&S(5,2)=S(3+2,2)=\widehat F_{2;3}(1)=\sum^{2-1}_{k=0}T^{(2)}_{2,k}\binom{6+2-k-1}{6}\\
\qquad &=T^{(2)}_{3,0}\cdot 7+T^{(2)}_{3,1}\cdot 1=1\cdot 7+8\cdot 1=15.
\end{align*}

Similarly, we have the first few values of $(f_{3,n}(\ell))_{1\leq \ell \leq n}$ for $n=1,2,$ and $3$:

\begin{align*}
f_{3,1}(1)=&\sum^{1-1}_{k=0}T^{(3)}_{1,k}\binom{3+1-k-1}{3}=T^{(3)}_{1,0}\cdot 1=1,\\
f_{3,2}(1)=&\sum^{1-1}_{k=0}T^{(3)}_{2,k}\binom{6+1-k-1}{6}=T^{(3)}_{2,0}\cdot 1=1,\\
f_{3,2}(2)=&\sum^{2-1}_{k=0}T^{(3)}_{2,k}\binom{6+2-k-1}{6}=T^{(3)}_{2,0}\cdot 7+T^{(3)}_{2,1}\cdot 1=1\cdot 7+3\cdot 1=10,\\
f_{3,3}(1)=&\sum^{1-1}_{k=0}T^{(3)}_{3,k}\binom{9+1-k-1}{9}=T^{(3)}_{3,0}\cdot  1=1,\\
f_{3,3}(2)=&\sum^{2-1}_{k=0}T^{(3)}_{3,k}\binom{9+2-k-1}{9}=T^{(3)}_{3,0}\cdot 10+T^{(3)}_{3,1}\cdot 1=1\cdot 10+12\cdot 1=22,\\
f_{3,3}(3)=&\sum^{3-1}_{k=0}T^{(3)}_{3,k}\binom{9+3-k-1}{9}=T^{(3)}_{3,0}\cdot 55+T^{(3)}_{3,1}\cdot 10+T^{(3)}_{3,2}\cdot 1\\
=&1\cdot 55+12\cdot 10+15\cdot 1=190.\\
\end{align*}

Hence,

\be\label{2.8}
\left[f_{3;n}(\ell)\right]_{1\leq \ell \leq n}=\left[\begin{array}{ccccccc}
1 &  &  &  &  &  &\\ 
1 & 10 &  &  &  &  &\\ 
1 & 22& 190 &  &  & &\cdots  \\ 
1 & 46 & 661& 5396 &  & & \\ 
1 & 94 & 2170& 25830 & 204645 &&  \\ 
   &  & \cdots  &  &  & &
\end{array}
\right].
\ee
The the generating function of the second column of the last triangle is $3 \cdot 2^n-2$ for $n=2,3,\ldots.$ However, the 
triangle itself seems new. 
\end{example}

The inverse of \eqref{3.4} can be presented as below.

\begin{theorem}\label{thm:3.5}
Let $T^{(m)}_{n,k}$ be the $m$th-order Eulerian numbers defined by Definition \ref{def:2.1}, and let $\widehat F_{m;n}(t)$ be the Eulerian fraction defined by \eqref{3.1-2}. Denote $\widehat F_{m;n}(t)=\sum_{\ell \geq 0} f_{m;n}(\ell)t^\ell$ with $f_{m;n}(0)=0$ for $n\geq 1$. Then 

\be\label{3.5}
T^{(m)}_{n,k}=\sum^{k+1}_{\ell=1}(-1)^{k-\ell+1}\binom{mn+1}{k-\ell+1}f_{m;n}(\ell)
\ee
for $n\geq 1$. Particularly, for $m=2$ and $n\geq 1$, 

\begin{align}\label{3.5-2}
&T^{(2)}_{n,k}=\sum^{k+1}_{\ell=1}(-1)^{k-\ell+1}\binom{2n+1}{k-\ell+1}f_{2;n}(\ell)\nonumber\\
=&\sum^{k+1}_{\ell=1}(-1)^{k-\ell+1}\binom{2n+1}{k-\ell+1}S(n+\ell, \ell),
\end{align}
where $S(n,k)$ is a Stirling number of the second kind. 
\end{theorem}

\begin{proof}
From \eqref{2.9} and \eqref{3.1-2}, 

\begin{align*}
tS_{m;n}(t)=&\sum^n_{k=1}T^{(m)}_{n,k-1}t^k\\
=& (1-t)^{mn+1}\sum_{\ell\geq 0}f_{m;n}(\ell)t^\ell\\
=&\sum_{j=0}^{mn+1}\sum_{\ell \geq 0}(-1)^{j}\binom{mn+1}{j}f_{m;n}(\ell)t^{j+\ell}\\
=&\sum_{k\geq 0}\left(\sum^k_{\ell=0}(-1)^{k-\ell}\binom{mn+1}{k-\ell}f_{m;n}(\ell)\right) t^k,
\end{align*}
which implies 

\[
T^{(m)}_{n,k-1}=\sum^k_{\ell=0}(-1)^{k-\ell}\binom{mn+1}{k-\ell}f_{m;n}(\ell)
\]
for $k\geq 1$. Thus, \eqref{3.5} follows. For $m=2$, by noting $f_{2;n}(\ell)=S(n+\ell, \ell)$, we obtain \eqref{3.5-2}.
\end{proof}

\begin{example}
For examples, to find second-order Eulerian numbers $T^{(2)}_{2,0}$, we substitute $(n,k)=(2,0)$ into \eqref{3.5-2} and  obtain  

\[
T^{(2)}_{2,0}=\sum^1_{\ell=1}(-1)^{1-\ell}\binom{5}{1-\ell}S(2+\ell,\ell)=S(3,1)=1.
\]
Similarly, 

\[
T^{(2)}_{3,1}=\sum^2_{\ell=1}(-1)^{2-\ell}\binom{7}{2-\ell}S(3+\ell,\ell)=-7\cdot 1+1\cdot 15=8.
\]
and 

\[
T^{(2)}_{4,2}=\sum^3_{\ell=1}(-1)^{3-\ell}\binom{9}{3-\ell}S(4+\ell, \ell)=36\cdot 1-9\cdot 31+1\cdot 301=58.
\]
\end{example}

Similarly, from (5) in \cite{GS}, we obtain the inverse relation between Stirling numbers of the first kind and the second-order Eulerian numbers.

\begin{theorem}\label{thm:3.6}
Let $T^{(m)}_{n,k}$ be the $m$th-order Eulerian numbers defined by Definition \ref{def:2.1}. Denote the signed Stirling numbers of the first kind and the unsigned Stirling numbers of the first kind by and $s(n,k)$ and $\genfrac{[}{]}{0pt}{}{n}{k}$, respectively. Then 

\begin{align}\label{3.5-3}
&\genfrac{[}{]}{0pt}{}{n}{n-k}=(-1)^{n-k}s(n,n-k)= \sum^n_{i=k+1}T^{(2)}_{k,2k-i}\binom{2k+n-i}{2k},\\
&T^{(2)}_{k, 2k-i}=\sum^i_{n=0}(-1)^{i-n}\genfrac{[}{]}{0pt}{}{n}{n-k}\binom{2k+1}{i-n}.\label{3.5-4}
\end{align}
\end{theorem}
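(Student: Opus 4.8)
The plan is to take \eqref{3.5-3} as the known input, i.e. equation (5) of \cite{GS} rewritten through the standard sign relation $\genfrac{[}{]}{0pt}{}{n}{n-k}=(-1)^{n-k}s(n,n-k)$ between unsigned and signed Stirling numbers of the first kind, and then to derive the inversion \eqref{3.5-4} by the same generating-function bookkeeping that drives the proof of Theorem \ref{thm:3.5}. Throughout, $k$ is held fixed and the upper index $n$ is treated as the free variable.

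First I would reindex \eqref{3.5-3}. Substituting $j=2k-i$ turns its right-hand side into $\sum_{j=0}^{k-1}T^{(2)}_{k,j}\binom{n+j}{2k}$, and since $T^{(2)}_{k,j}=0$ outside $0\le j\le k-1$ the sum may be taken over all $j$. Next I would form the ordinary generating function $G_k(t)=\sum_{n\ge 0}\genfrac{[}{]}{0pt}{}{n}{n-k}\,t^{n}$ and invoke the elementary expansion $\sum_{n\ge 0}\binom{n+j}{2k}t^{n}=t^{2k-j}(1-t)^{-2k-1}$ to obtain
\[
G_k(t)=\frac{1}{(1-t)^{2k+1}}\sum_{j=0}^{k-1}T^{(2)}_{k,j}\,t^{2k-j}=\frac{1}{(1-t)^{2k+1}}\sum_{i=k+1}^{2k}T^{(2)}_{k,2k-i}\,t^{i},
\]
the last equality being the substitution $i=2k-j$ once more. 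Multiplying through by $(1-t)^{2k+1}$ shows that $(1-t)^{2k+1}G_k(t)=\sum_{i}T^{(2)}_{k,2k-i}t^{i}$ is a polynomial whose coefficient of $t^{i}$ is exactly $T^{(2)}_{k,2k-i}$.

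The final step is to expand the left-hand side by the binomial theorem, $(1-t)^{2k+1}=\sum_{p\ge 0}(-1)^{p}\binom{2k+1}{p}t^{p}$, multiply by the series for $G_k(t)$, and read off the coefficient of $t^{i}$. Collecting the terms with $p+n=i$ produces $\sum_{n=0}^{i}(-1)^{i-n}\binom{2k+1}{i-n}\genfrac{[}{]}{0pt}{}{n}{n-k}$, and equating this with the coefficient $T^{(2)}_{k,2k-i}$ found in the previous step gives \eqref{3.5-4} directly.

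I expect the main obstacle to be bookkeeping rather than conceptual: keeping the two reindexings $j=2k-i$ consistent and, above all, justifying that $(1-t)^{2k+1}G_k(t)$ is genuinely a polynomial, so that coefficient extraction is legitimate. This reduces to verifying that $\genfrac{[}{]}{0pt}{}{n}{n-k}$, as a function of $n$, coincides for every $n\ge 0$ with the degree-$2k$ expression $\sum_{j}T^{(2)}_{k,j}\binom{n+j}{2k}$, including the small values $n<k$ where both sides vanish; but this is precisely the content of \eqref{3.5-3}, which we already have in hand.
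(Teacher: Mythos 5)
Your argument is correct and is essentially the paper's own proof: both rest on the Gessel--Stanley generating function identity $\sum_{n\ge 0}\genfrac{[}{]}{0pt}{}{n}{n-k}x^n=(1-x)^{-2k-1}\sum_{i=k+1}^{2k}T^{(2)}_{k,2k-i}x^i$, and both obtain \eqref{3.5-4} by multiplying through by $(1-x)^{2k+1}$ and comparing coefficients of $x^i$. The only organizational difference is that you take \eqref{3.5-3} as the given input and reconstruct the generating function from it, whereas the paper derives \eqref{3.5-3} and \eqref{3.5-4} in parallel directly from equation (5) of \cite{GS} by expanding $(1-x)^{\mp(2k+1)}$.
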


\begin{proof}
From (5) in \cite{GS} with a few modification and noting $B_{k,2k-i+1}=T^{(2)}_{k,2k-i}$, we have 

\begin{align*}
&\sum_{n\geq 0} g_k(n)x^n=\sum_{n\geq 0} \genfrac{[}{]}{0pt}{}{n}{n-k}x^n\\
=& \sum^{2k}_{i=k+1}B_{k,2k-i+1}x^i\sum_{j\geq 0}\binom{2k+j}{j}x^j\\
=&\sum^{2k}_{i=k+1}\sum_{j\geq 0} T^{(2)}_{k,2k-i}\binom{2k+j}{j}x^{i+j}\\
=&\sum^{2k}_{i=k+1}\sum_{n\geq i} T^{(2)}_{k,2k-i}\binom{2k+n-i}{2k}x^{n}\\
=&\sum_{n\geq 0}\sum^n_{i=k+1}T^{(2)}_{k,2k-i}\binom{2k+n-i}{2k}x^{n}.
\end{align*}
By comparing the coefficients of the power $x^n$ on the leftmost side and rightmost side of the last expression, we may obtain \eqref{3.5-3}.

Also from (5) in \cite{GS} with modification, we have 

\begin{align*}
& \sum^{2k}_{i=k+1}B_{k, 2k-i+1}=\sum^{2k}_{i=k+1}T^{(2)}_{k,2k-i}\\
=&\sum_{n\geq 0}\sum_{j\geq 0} g_k(n)\binom{2k+1}{j}(-1)^jx^{n+j}\\
=&\sum_{n\geq 0}\sum_{i\geq n} g_k(n)\binom{2k+1}{i-n}(-1)^{i-n}x^{i}\\
=&\sum_{i\geq 0}\sum^i_{n= 0}(-1)^{i-n}\genfrac{[}{]}{0pt}{}{n}{n-k}\binom{2k+1}{i-n}x^i.
\end{align*}
By comparing the coefficients of the power $x^n$, we obtain \eqref{3.5-4}.
\end{proof}

\begin{example}
We may use \eqref{3.5-3} to evaluate $\genfrac{[}{]}{0pt}{}{u}{v}$ and $s(u,v)$ by substituting into $n=u$ and $k=u-v$. For instance, 
to find $\genfrac{[}{]}{0pt}{}{2}{1}$ by using \eqref{3.5-3}, we substitute $n=2$ and $k=1$ into \eqref{3.5-3} and calculate

\[
\genfrac{[}{]}{0pt}{}{2}{1}=\sum^2_{i=2}T^{(2)}_{1,2-i}\binom{4-i}{2}=T^{(2)}_{1,0}\binom{2}{2}=1
\]
and $s(2,1)=(-1)^{2-1}\genfrac{[}{]}{0pt}{}{2}{1}=-1$.

Similarly, substituting $(n,k)=(3,2)$ and $(n,k)=(4,2)$ into \eqref{3.5-3}, respectively, yields 

\begin{align*}
&\genfrac{[}{]}{0pt}{}{3}{1}=\sum^3_{i=3}T^{(2)}_{2,4-i}\binom{7-i}{4}=T^{(2)}_{2,1}\binom{4}{4}=2\\
&\genfrac{[}{]}{0pt}{}{4}{2}=\sum^4_{i=3}T^{(2)}_{2,4-i}\binom{8-i}{4}=T^{(2)}_{2,1}\binom{5}{4}+T^{(2)}_{2,0}\binom{4}{4}=2\cdot 5+1\cdot 1=11.
\end{align*}
Thus $s(3,1)=2$ and $s(4,2)=11$.

We may also use \eqref{3.5-4} to calculate the second-order Eulerian numbers $T^{(2)}_{u,v}$ by substituting $(k,i)=(u, 2u-v)$. For instance, to find $T^{(2)}_{2,1}$ by using \eqref{3.5-4}, we substitute $k=2$ and $i=3$ into \eqref{3.5-4} and evaluate 

\begin{align*}
&T^{(2)}_{2,1}=\sum^3_{n=0}(-1)^{3-n}\genfrac{[}{]}{0pt}{}{n}{n-2}\binom{5}{3-n}\\
=&(-1)^0\genfrac{[}{]}{0pt}{}{3}{1}\binom{5}{0}=2.
\end{align*}

Similarly,

\begin{align*}
&T^{(2)}_{3,1}=\sum^5_{n=0}(-1)^{5-n}\genfrac{[}{]}{0pt}{}{n}{n-3}\binom{7}{5-n}\\
=&-\genfrac{[}{]}{0pt}{}{4}{1}\binom{7}{1}+\genfrac{[}{]}{0pt}{}{5}{2}\binom{7}{0}\\
=&-6\cdot 7+50\cdot 1=8.
\end{align*}
And 

\begin{align*}
&T^{(2)}_{4,2}=\sum^6_{n=0}(-1)^{6-n}\genfrac{[}{]}{0pt}{}{n}{n-4}\binom{9}{6-n}\\
=&-\genfrac{[}{]}{0pt}{}{5}{1}\binom{9}{1}+\genfrac{[}{]}{0pt}{}{6}{2}\binom{9}{0}=-24\cdot 9+274\cdot 1=58.
\end{align*}
\end{example}

\noindent{\bf Remark} From Theorems \ref{thm:3.4}-\ref{thm:3.6}, we may establish the following relationship between unsigned Stirling numbers of the first kind and the Stirling numbers of the second kind:

\begin{align*}
&S(n+\ell,\ell)\\
=&\sum^{\ell-1}_{k=1}\sum^{2n-k}_{j=0}(-1)^{2n-k-j}\binom{2n+1}{2n-k-j}\binom{2n-k+\ell-1}{2n}\genfrac{[}{]}{0pt}{}{j}{j-n},\\
&\genfrac{[}{]}{0pt}{}{n}{n-k}\\
=&\sum^n_{i=k+1}\sum^{2k-i+1}_{\ell =1}(-1)^{2k-\ell-i+1}\binom{2k+1}{2k-\ell-i+1}\binom{2k+n-i}{2k}S(k+\ell, \ell).
\end{align*}

We now define a function in terms of the $m$th-order Eulerian numbers.

\begin{definition}\label{def:3.6}
Let $T^{(m)}_{n,k}$ be the $m$th-order Eulerian numbers defined by Definition \ref{def:2.1}. Define function $\phi_{m;n}:{\bR}\to {\bR}$ by 

\be\label{3.6}
\phi_{m;n}(x)=\sum^{n-1}_{k=0} T^{(m)}_{n,k}\binom{x+k}{mn}=\sum^{n-1}_{k=0} T^{(m)}_{n,k}\frac{(x+k)_{mn}}{mn},
\ee
where $(x)_\ell$ is the falling factorial defined as the polynomial, $(x)_\ell=x(x-1)\cdots (x-\ell+1)$.

Particularly, 

\begin{align}
&\phi_{1;n}(x)=x^n,\label{3.6-2}\\
&\phi_{2;n}(x)=\left[ \begin{array} {c} x\\x-n\end{array}\right].\label{3.6-3}
\end{align}
\end{definition}

Equations \eqref{3.6-2} and \eqref{3.6-3} are known as Worpitzky's identities, where \eqref{3.6-2} can be seen in Worpitzky \cite{Wor}. Two different proofs of \eqref{3.6-2} can be found in \cite{Com} and \cite{GKP}, while \eqref{3.6-3} is proved by induction in \cite{GKP}. 

For $m=3$, we have 

\begin{align*}
&\phi_{3;1}(x)=\binom{x}{3},\\
&\phi_{3;2}(x)=\binom{x}{6}+3\binom{x+1}{6},\\
&\phi_{3;3}(x)=\binom{x}{9}+12\binom{x+1}{9}+15\binom{x+2}{9},
\end{align*}
etc. However, the closed forms of $\phi_{m;n}(x)$, $m\geq 3$, are unknown.

\section{Values of $T^{(m)}_{n,k}$}

We now establish the formula for the values of $m$th-order Eulerian numbers by using a Fa\`a di Bruno's type expression, which presents a relationship between partitions and permutations. 

\begin{theorem}\label{thm:2.5}
Let $T^{(m)}_{n,k}$ be the $m$th-order Eulerian numbers defined in Definition \ref{def:2.1}. Then

\begin{align}\label{2.13}
&T^{(m)}_{n,k}=\sum_{\substack {t_1+t_2+\cdots +t_{k+1}=n-k-1\\t_1, \ldots, t_{k+1}\geq 0}}1^{t_1}2^{t_2}\ldots (k+1)^{t_{k+1}}(mt_1+m)\nonumber\\
&(m(t_1+t_2)+2(m-1)+1)\ldots (m(t_1+\cdots+t_{k})+k(m-1)+1).
\end{align}

Particularly, 

\begin{align}\label{2.13-2}
&T^{(1)}_{n,k}=\langle \begin{array}{cc}n\\k\end{array}\rangle=\sum_{\substack {t_1+t_2+\cdots +t_{k+1}=n-k-1\\t_1, \ldots, t_{k+1}\geq 0}}1^{t_1}2^{t_2}\ldots (k+1)^{t_{k+1}}\nonumber\\
&\qquad \qquad \qquad (t_1+1)(t_1+t_2+1)\ldots (t_1+\cdots+t_{k}+1).
\end{align}
\end{theorem}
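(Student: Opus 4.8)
The plan is to prove \eqref{2.13} by induction on $n$ (with $m$ fixed), showing that its right-hand side satisfies the same recurrence \eqref{2.1} and the same boundary data as $T^{(m)}_{n,k}$; since that recurrence together with the stipulated initial values determines every entry uniquely, the two sequences must coincide. Write $R^{(m)}_{n,k}$ for the right-hand side of \eqref{2.13}, and abbreviate the $j$-th descent factor by $D_j:=m(t_1+\cdots+t_j)+j(m-1)+1$, so that
\[
R^{(m)}_{n,k}=\sum_{\substack{t_1+\cdots+t_{k+1}=n-k-1\\ t_1,\ldots,t_{k+1}\geq 0}} 1^{t_1}2^{t_2}\cdots(k+1)^{t_{k+1}}\,D_1 D_2\cdots D_k.
\]
Conceptually this sum records all ways of iterating \eqref{2.1} from $(n,k)$ down to the base cell $(1,0)$: the exponent $t_{j+1}$ counts how many times one takes the diagonal term $(k'+1)T^{(m)}_{n'-1,k'}$ while remaining at level $j$, and $D_k,D_{k-1},\ldots,D_1$ are the coefficients $mn'-k'-m+1$ collected at the $k$ descents as $n'$ decreases. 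The boundary is immediate: when $k\geq n$ the constraint forces $t_1+\cdots+t_{k+1}=n-k-1<0$, the index set is empty, and $R^{(m)}_{n,k}=0$; and for $(n,k)=(1,0)$ the unique admissible term has $t_1=0$ with no descent factors, so $R^{(m)}_{1,0}=1$. These agree with the prescribed values of $T^{(m)}_{n,k}$.

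For the inductive step I would assume \eqref{2.13} at $n-1$ for all $k$ and split the sum defining $R^{(m)}_{n,k}$ according to whether $t_{k+1}\geq 1$ or $t_{k+1}=0$. In the case $t_{k+1}\geq 1$, substitute $t_{k+1}=t_{k+1}'+1$: the constraint becomes $t_1+\cdots+t_k+t_{k+1}'=(n-1)-k-1$, one factor $(k+1)$ is freed, and every $D_j$ with $j\leq k$ depends only on $t_1,\ldots,t_j$ and is therefore unchanged; this block sums to $(k+1)R^{(m)}_{n-1,k}$. In the case $t_{k+1}=0$, the constraint forces $t_1+\cdots+t_k=n-k-1$, so the top descent factor becomes the \emph{constant} $D_k=m(n-k-1)+k(m-1)+1=mn-k-m+1$ and may be pulled out; what remains is precisely the defining sum of $R^{(m)}_{n-1,k-1}$, whose stay-variables are $t_1,\ldots,t_k$ with total $n-k-1=(n-1)-(k-1)-1$ and whose descent factors are $D_1,\ldots,D_{k-1}$. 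Hence this block equals $(mn-k-m+1)R^{(m)}_{n-1,k-1}$, and summing the two blocks reproduces exactly \eqref{2.1}. Finally, setting $m=1$ collapses each $D_j$ to $t_1+\cdots+t_j+1$, which yields \eqref{2.13-2} at once.

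I expect the only genuine friction to be the index bookkeeping, and in particular the identity $D_k\big|_{t_{k+1}=0}=mn-k-m+1$, which uses the sum constraint to turn the formula's top descent factor into the recurrence's descent coefficient; this is the linchpin of the $t_{k+1}=0$ case, since the formula indexes descents by the increasing partial sums $t_1+\cdots+t_j$ whereas the natural path descends from level $k$, so the matching of $D_k,\ldots,D_1$ with the successive recurrence coefficients must be checked at the top step. Everything else is routine reindexing of summation variables.
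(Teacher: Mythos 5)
Your proof is correct and follows essentially the same route as the paper's: induction on $n$ via the recurrence \eqref{2.1}, with the sum split according to whether $t_{k+1}=0$ or $t_{k+1}\geq 1$ (the paper runs the identical computation in the forward direction, reindexing $t_{k+1}\mapsto t_{k+1}+1$ to absorb the factor $k+1$ and merging the two level-$n$ sums into the single level-$(n+1)$ sum). The pivotal identity you single out, namely that the top descent factor evaluated at $t_{k+1}=0$ equals $mn-k-m+1$, is exactly the step the paper performs when it rewrites the coefficient $mn-k+1$ as $m(n-k)+k(m-1)+1$.
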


\begin{proof} We prove \eqref{2.13} on $n$ by using induction. Since $k\leq n$, for $n=1$ \eqref{2.13} holds because the both sides of the equation are equal to $1$.

Assume \eqref{2.13} holds for $n$ with $0\leq k\leq n-1$, we consider $T^{(m)}_{n+1,k}$ with $0\leq k\leq n$ by applying 

\[
T^{(m)}_{n+1, k}=(k+1)T^{(m)}_{n,k}+(mn-k+1)T^{(m)}_{n,k-1}.
\]
Hence,

\begin{align*}
&T^{(m)}_{n+1, k}= (k+1) \sum_{\substack {t_1+t_2+\cdots +t_{k+1}=n-k-1\\t_1, \ldots, t_{k+1}\geq 0}}1^{t_1}2^{t_2}\ldots (k+1)^{t_{k+1}}(mt_1+m)\\
&(m(t_1+t_2)+2(m-1)+1)\ldots (m(t_1+\cdots+t_{k})+k(m-1)+1)\\
&+(mn-k+1)\sum_{\substack {t_1+t_2+\cdots +t_{k}=n-k\\t_1, \ldots, t_{k}\geq 0}}1^{t_1}2^{t_2}\ldots k^{t_{k}}(mt_1+m)\\
&(m(t_1+t_2)+2(m-1)+1)\ldots (m(t_1+\cdots+t_{k-1})+(k-1)(m-1)+1)\\
=&\sum_{\substack {t_1+t_2+\cdots +t_{k+1}=n-k-1\\t_1, \ldots, t_{k+1}\geq 0}}1^{t_1}2^{t_2}\ldots (k+1)^{t_{k+1}+1}(mt_1+m)\\
&(m(t_1+t_2)+2(m-1)+1)\ldots (m(t_1+\cdots+t_{k})+k(m-1)+1)\\
&+\sum_{\substack {t_1+t_2+\cdots +t_{k}=n-k\\t_1, \ldots, t_{k}\geq 0}}1^{t_1}2^{t_2}\ldots k^{t_{k}}(mt_1+m)
(m(t_1+t_2)+2(m-1)+1)\\
&\ldots (m(t_1+\cdots+t_{k-1})+(k-1)(m-1)+1)(m(n-k)+k(m-1)+1)\\
=&\sum_{\substack {s_1+s_2+\cdots +s_{k+1}=n-k\\s_1, \ldots, s_{k}\geq 0, s_{k+1}\geq 1}}1^{s_1}2^{s_2}\ldots (k+1)^{s_{k+1}}(ms_1+m)\\
&(m(s_1+s_2)+2(m-1)+1)\ldots (m(s_1+\cdots+s_{k})+k(m-1)+1)\\
&+\sum_{\substack {t_1+t_2+\cdots +t_{k}+t_{k+1}=n-k\\t_1, \ldots, t_{k}\geq 0, t_{k+1}=0}}1^{t_1}2^{t_2}\ldots k^{t_{k}}(k+1)^{t_{k+1}}(mt_1+m)\\
&(m(t_1+t_2)+2(m-1)+1)\ldots (m(t_1+\cdots+t_{k-1})+(k-1)(m-1)+1)\\
&(m(t_1+\cdots+t_{k})+k(m-1)+1)\\
=&\sum_{\substack {t_1+t_2+\cdots +t_{k}+t_{k+1}=n-k\\t_1, \ldots, t_{k+1}\geq 0}}1^{t_1}2^{t_2}\ldots k^{t_{k}}(k+1)^{t_{k+1}}(mt_1+m)\\
&(m(t_1+t_2)+2(m-1)+1)\ldots 
(m(t_1+\cdots+t_{k})+k(m-1)+1).
\end{align*}
Therefore \eqref{2.13} holds for $n+1$ with $0\leq k\leq n$, which completes the proof of the theorem.
\end{proof}

\begin{example}
As examples of the formula \eqref{2.13}, the value of $T^{(3)}_{5,2}$ is 

\begin{align*}
&T^{(3)}_{5,2}=\sum_{\substack{t_1+t_2+t_3=2\\t_1,t_2,t_3\geq 0}}1^{t_1}2^{t_2}3^{t_3}(3t_1+3)(3(t_1+t_2)+5)\\
=&1^22^03^0(6+3)(6+5)+1^02^23^0(0+3)(6+5)+1^02^03^2(0+3)(0+5)\\
&+1^12^13^0(3+3)(6+5)+1^12^03^1(3+3)(3+5)+1^02^13^1(0+3)(3+5)\\
=&99+132+135+132+144+144=786.
\end{align*}

And the value of $T^{(4)}_{5,2}$ is 

\begin{align*}
&T^{(4)}_{5,2}=\sum_{\substack{t_1+t_2+t_3=2\\t_1,t_2,t_3\geq 0}}1^{t_1}2^{t_2}3^{t_3}(3t_1+4)(3(t_1+t_2)+7)\\
=&1^22^03^0(8+4)(8+7)+1^02^23^0(0+4)(8+7)+1^02^03^2(0+4)(0+7)\\
&+1^12^13^0(4+4)(8+7)+1^12^03^1(4+4)(4+7)+1^02^13^1(0+4)(4+7)\\
=&1440.
\end{align*}

\end{example}
Substituting $k=n-1$ and $k=1$ into \eqref{2.13}, we may find the values of $T^{(m)}_{n,n-1}$ and $T^{(m)}_{n,1}$ as follows. 

\begin{corollary}\label{cor:2.5}
Let $T^{(m)}_{n,k}$ be the $m$th-order Eulerian numbers defined in Definition \ref{def:2.1}. Then

\begin{align}\label{2.14}
&T^{(m)}_{n,n-1}=m (2(m-1)+1)\ldots((n-1)(m-1)+1),\\
&T^{(m)}_{n,1}=m(2^n-n-1).
\label{2.15}
\end{align}
\end{corollary}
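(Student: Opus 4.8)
The plan is to specialize the explicit formula \eqref{2.13} from Theorem \ref{thm:2.5} at $k=n-1$ and at $k=1$; in each case the multi-index summation collapses so drastically that the corollary follows from a short direct computation rather than a new induction. A preliminary observation that streamlines everything is that the first factor $(mt_1+m)$ in \eqref{2.13} is the $j=1$ instance of the general factor $\bigl(m(t_1+\cdots+t_j)+j(m-1)+1\bigr)$, since $1\cdot(m-1)+1=m$; thus the product runs uniformly over $j=1,\ldots,k$.

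For \eqref{2.14} I would set $k=n-1$. The constraint $t_1+\cdots+t_n=n-k-1=0$ then forces $t_1=\cdots=t_n=0$, so only one term survives. In it the power weight $1^{t_1}\cdots n^{t_n}$ equals $1$, and each factor reduces to $j(m-1)+1$ for $j=1,\ldots,n-1$. The resulting product is exactly $m\,(2(m-1)+1)\cdots((n-1)(m-1)+1)$, giving \eqref{2.14}.

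For \eqref{2.15} I would set $k=1$. Now the constraint is $t_1+t_2=n-2$, only the single factor $m(t_1+1)$ remains, and the weight is $1^{t_1}2^{t_2}$. Writing $t_1=i$ and $t_2=n-2-i$, this yields
\[
T^{(m)}_{n,1}=m\sum_{i=0}^{n-2}2^{\,n-2-i}(i+1).
\]
I would then re-index by $j=n-2-i$ to obtain $m\sum_{j=0}^{n-2}2^{j}(n-1-j)$ and split it as $(n-1)\sum_j 2^j$ minus $\sum_j j\,2^j$, using the standard identities $\sum_{j=0}^{N}2^j=2^{N+1}-1$ and $\sum_{j=0}^{N}j\,2^j=(N-1)2^{N+1}+2$ with $N=n-2$. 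The two series combine to $2^n-n-1$, which is \eqref{2.15}.

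The only genuine computation is the closed-form evaluation of the arithmetic--geometric sum $\sum_{i=0}^{n-2}2^{n-2-i}(i+1)$ in the $k=1$ case; the $k=n-1$ case is a one-term specialization and needs no work. Since this weighted geometric series is entirely standard, I anticipate no real obstacle, only careful bookkeeping of the two elementary sums.
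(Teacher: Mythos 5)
Your proposal is correct and follows exactly the route the paper intends: the paper offers no written proof beyond the remark that the corollary follows by ``substituting $k=n-1$ and $k=1$ into \eqref{2.13}'', and your specialization of that formula (the forced $t_1=\cdots=t_n=0$ collapse for $k=n-1$, and the arithmetic--geometric sum $m\sum_{i=0}^{n-2}2^{n-2-i}(i+1)=m(2^n-n-1)$ for $k=1$) supplies precisely the omitted details, with correct bookkeeping throughout.
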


\begin{theorem}\label{thm:2.6}
Let $T^{(m)}_{n,k}$ be the $m$th-order Eulerian numbers defined in Definition \ref{def:2.1}. Then

\be\label{2.16}
\sum^{n-1}_{k=0} T^{(m-1)}_{n,k}=T^{(m)}_{n,n-1}.
\ee
\end{theorem}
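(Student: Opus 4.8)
The plan is to establish \eqref{2.16} by recognizing both sides as the same explicit product, drawing directly on the two closed forms proved earlier in Corollaries \ref{cor:2.4} and \ref{cor:2.5}; no new combinatorial or analytic input is needed.

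First I would rewrite the left-hand side using the row-sum identity \eqref{2.12}, applied with the order parameter $m$ replaced by $m-1$. This gives
\[
\sum^{n-1}_{k=0} T^{(m-1)}_{n,k}=1\cdot\big((m-1)+1\big)\cdot\big(2(m-1)+1\big)\cdots\big((n-1)(m-1)+1\big)=\prod_{j=0}^{n-1}\big(j(m-1)+1\big).
\]
Since the $j=0$ factor is simply $1$, this product reduces to $\prod_{j=1}^{n-1}\big(j(m-1)+1\big)$.

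Next I would invoke the explicit value \eqref{2.14} of the rightmost entry and read its factorization in the same product notation:
\[
T^{(m)}_{n,n-1}=m\cdot\big(2(m-1)+1\big)\cdots\big((n-1)(m-1)+1\big)=\prod_{j=1}^{n-1}\big(j(m-1)+1\big),
\]
where one uses that the leading factor $m$ is exactly $1\cdot(m-1)+1$. Comparing the two displayed products yields \eqref{2.16} at once.

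The argument is thus essentially a bookkeeping matching. The only points that need care are performing the index shift $m\mapsto m-1$ correctly in \eqref{2.12} and noticing that the extra $j=0$ term there is the trivial factor $1$, so that the row sum of the $(m-1)$th-order triangle and the top entry $T^{(m)}_{n,n-1}$ of the $m$th-order triangle collapse to the identical product $\prod_{j=1}^{n-1}\big(j(m-1)+1\big)$. I do not anticipate any genuine obstacle. Should one instead prefer a self-contained combinatorial proof, the hard part would be to exhibit a bijection between all $(m-1)$-Stirling permutations of order $n$ and those $m$-Stirling permutations of order $n$ carrying the maximal number $n-1$ of descents; that route is considerably more delicate and is unnecessary for the algebraic verification above.
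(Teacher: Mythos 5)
Your proof is correct and follows essentially the same route as the paper, which likewise obtains \eqref{2.16} by comparing the row-sum formula \eqref{2.12} (with $m$ replaced by $m-1$) against the closed form \eqref{2.14} for $T^{(m)}_{n,n-1}$. You simply spell out the index shift and the matching of the product factors more explicitly than the paper's one-line proof does.
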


\begin{proof}
Equation \eqref{2.11} is derived from the comparison of \eqref{2.12} and \eqref{2.14}.
\end{proof}

\noindent{\bf Remark 4.1}
There is a combinatorial proof of Equation \eqref{2.16}. For instance, we know that $T^{(2)}_{2,0}=1$ counts the permutation in $Q_4$ with no descent, i.e., $1122$, and $T^{(2)}_{2,1}=2$ is the number of permutations with single descent: $2211$ and $2112$. To prove 

\[
\sum^1_{k=0}T^{(2)}_{2,k}=T^{(2)}_{2,0}+T^{(2)}_{2,1}=3=T^{(3)}_{2,1},
\]
we may insert $21$ at the beginning of $1122$ and each descent of $2211$ and $2112$ to obtain three permutations in $Q_6$ with single descent: $211122$, $222111$, and $221112$, which are exactly three permutations counted by $T^{(3)}_{2,1}$.

\end{document}